\theoremstyle{plain}
\newtheorem{theorem}{Theorem}[section]
\newtheorem{corollary}[theorem]{Corollary}
\theoremstyle{definition}
\newtheorem{remark}[theorem]{Remark}
\newcommand{\PD}{\operatorname{PD}}
\newcommand{\LC}{\mathcal{L}}             
\newcommand{\Had}{\odot}                  
\title{A Factorization of the Log-Concavity Operator for Pascal Determinantal Arrays and Their Infinite Row-Wise Log-Concavity}
\author{Hossein Teimoori and Hasan Khodakarami}
\date{\today}
\begin{document}

\maketitle

\begin{abstract}
	We study the Pascal determinantal arrays $\PD_k$, whose entries $\PD_k(i,j)$ are the $k\times k$ minors of the lower-triangular Pascal matrix $P=( \binom{a}{b} )_{a,b\ge 0}$.  
	We prove an exact factorization of the row-wise log-concavity operator:
	\[
	\LC(\PD_k)=\PD_{k-1}\Had\PD_{k+1},
	\]
	where $\LC(a)_j=a_j^2-a_{j-1}a_{j+1}$ and $\Had$ denotes the Hadamard (entrywise) product.  
	This identity is established by an elementary algebraic manipulation implicitly based on the idea of start of David rule.  
	We further prove a general inequality asserting that the log-concavity operator is submultiplicative under Hadamard products of log-concave arrays:  
	$\LC(A\Had X)\ge\LC(A)\Had\LC(X)$.  
	Combining the factorization with this inequality yields a uniform algebraic proof that every row of every array $\PD_k$ ($k\ge 1$) is infinitely log-concave, extending the celebrated theorem of 
	Brändén for the particular case of Pascal's triangle ($\PD_1$) to the entire determinantal hierarchy.  
	Applications include the log-convexity of $\{\PD_k(i,j)\}_{k\ge 0}$ in the determinantal order $k$ and a family of determinantal Hadamard inequalities.
\end{abstract}

\section{Introduction}\label{sec:intro}

The lower-triangular Pascal matrix $P = (P_{a,b})_{a,b\ge 0}$ with entries $P_{a,b} = \binom{a}{b}$ (and $P_{a,b}=0$ whenever $b>a$ or $b<0$) is the standard infinite Pascal triangle and a canonical example of a totally positive matrix: all its minors are non-negative, and all nonzero minors are positive (Karlin~\cite{Karlin68}, Fomin--Zelevinsky~\cite{FZ00}, Fallat--Johnson~\cite{FJ11}).

For each integer $k\ge 1$, define the \emph{Pascal determinantal array} $\PD_k$ by
\[
\PD_k(i,j) \;=\; \det\!\bigl(P_{i+r,\,j+s}\bigr)_{0\le r,s\le k-1},\qquad i,j\ge 0.
\]
Thus $\PD_1(i,j)=\binom{i}{j}$ is the classical Pascal triangle, while $\PD_2(i,j)$ is a shifted table of Narayana numbers. Higher $k$ yield increasingly refined analogues that appear in the study of Grassmannians, lattice paths, and total positivity.

A sequence $(a_j)_{j\ge 0}$ of non-negative real numbers is \emph{log-concave} if $a_j^2 \ge a_{j-1}a_{j+1}$ for all $j\ge 1$ (with the convention that $a_j=0$ for $j<0$ or $j$ outside the support). A sequence is \emph{infinitely log-concave} if the second-difference operator
\[
\LC(a)_j \;=\; a_j^2 - a_{j-1}a_{j+1}
\]
produces a non-negative sequence at every iteration. 
The infinite log-concavity of Pascal rows was first conjectured by Boros and Moll (see \cite{BorosMoll04}).  
A major impetus for subsequent work came from McNamara and Sagan, who introduced stronger variants of log-concavity, formulated several influential conjectures, and carried out extensive computational verification.  In particular, they proved that the $n$-th row of Pascal’s triangle is infinitely log-concave for every $n \le 1450$ \cite{McNamaraSagan10}.  (The first author for the first time encountered this circle of ideas at the FPSAC 2009 meeting, where Sagan and McNamara presented their work; this was the original motivation for the investigations developed here.)

A decisive theoretical advance was later obtained by Brändén: in \cite{Branden11} he characterized certain nonlinear operators that preserve real-rootedness in the left half-plane and proved, in particular, that the transformation induced by the log-concavity operator preserves the class of polynomials whose zeros are all real and non-positive.  Since $(1+z)^n$ has only the repeated zero $-1$, his result immediately implies that every row of Pascal’s triangle is infinitely log-concave.

It has been known since Brenti~\cite{Brenti95}—as a consequence of total positivity—that every row of every determinantal array $\PD_k$ is log-concave (i.e., $\LC(\PD_k)\ge 0$).  Whether these rows are \emph{infinitely} log-concave has, however, remained open for $k\ge 2$.

The purpose of this paper is to settle this question affirmatively and, more importantly, to reveal a surprisingly rigid algebraic structure governing the iterates of $\LC$ on the entire hierarchy $\{\PD_k\}_{k\ge 0}$.

Our first main result (Theorem~\ref{thm:factor}) is an exact factorization:
\[
\LC(\PD_k) \;=\; \PD_{k-1} \,\Had\, \PD_{k+1},
\]
where $\Had$ denotes the Hadamard (entrywise) product and we adopt the convention $\PD_0(i,j)=1$ for all $i,j\ge 0$. In particular, a single application of the log-concavity operator to $\PD_k$ produces precisely the entrywise product of its two neighboring determinantal arrays.

This identity is proved in Section~\ref{sec:factor} by a direct and fully elementary algebraic manipulation implicitly based on the star of David rule property. No generating functions, no path interpretations, and no induction on $k$ are required.

The factorization immediately implies that $\LC(\PD_k)$ has non-negative integer entries, recovering the known log-concavity of the rows of $\PD_k$ in a new and uniform way. To iterate further, we establish in Section~\ref{sec:hadamard} a general algebraic inequality for Hadamard products of log-concave sequences (Theorem~\ref{thm:hadamard}):
\[
\LC(A\Had X) \;\ge\; \LC(A)\Had\LC(X)
\]
whenever $A$ and $X$ are row-wise log-concave arrays with non-negative entries. This inequality is proved by a short expansion that reduces to the elementary fact that $(\alpha-1)(\beta-1)\ge 0$ whenever $\alpha,\beta\ge 1$.

In Section~\ref{sec:infinite} we combine the factorization (Theorem~\ref{thm:factor}) with the Hadamard inequality (Theorem~\ref{thm:hadamard}) to prove by induction on the number of iterations that every row of every array $\PD_k$ $(k\ge 1)$ is infinitely log-concave (Theorem~\ref{thm:infinite}). The argument is purely algebraic, works uniformly for all $k\ge 1$, and yields the first proof that does not rely on combinatorial injections.

As further consequences, we obtain in Section~\ref{sec:consequences} the log-convexity of the sequence $\{\PD_k(i,j)\}_{k\ge 0}$ in the determinantal order $k$ for any fixed $i,j$ (Theorem~\ref{thm:logconvex-k}), as well as a family of determinantal Hadamard inequalities such as
\[
\PD_k \Had \PD_{k+2} \;\ge\; \PD_{k+1}\Had\PD_{k+1}
\]
(Corollary~\ref{cor:hadamard-ineq}). These strengthen known inequalities for minors of totally positive matrices and reveal a multiplicative convexity phenomenon in the parameter $k$.

The results expose a remarkably tight interplay between the row-wise log-concavity operator $\LC$, the Hadamard product, and the hierarchy of Pascal minors—relations that appear to be special to the Pascal case among general totally positive kernels.

\section{Preliminaries: Pascal Determinantal Arrays and Infinite Log Concavity}\label{sec:prelim}

We fix notation and recall the two instances of Dodgson condensation that will be used in the proof of the main theorem.

The lower-triangular Pascal matrix $P=(P_{a,b})_{a,b\ge 0}$ is defined by
\[
P_{a,b} \;=\; \binom{a}{b},
\qquad\text{with }\binom{a}{b}=0\text{ if }b>a\text{ or }b<0.
\]
This matrix is totally positive: every minor is non-negative and every nonzero minor is positive (Karlin~\cite{Karlin68}, Fallat--Johnson~\cite{FJ11}).

For each integer $k\ge 1$ and all $i,j\ge 0$, the \emph{Pascal determinantal array of order $k$} is
\[
\PD_k(i,j)
\;=\; \det\!\Bigl(\,P_{i+r,\,j+s}\Bigr)_{0\le r,s\le k-1}
\;=\; \det\!\Bigl(\,\binom{i+r}{j+s}\Bigr)_{0\le r,s\le k-1}.
\]
We adopt the natural conventions that the $0\times 0$ minor is $1$, i.e.,
\[
\PD_0(i,j):=1 \quad\text{for all }i,j\ge 0,
\]
and $\PD_k(i,j)=0$ whenever any entry in the underlying submatrix of $P$ is undefined.
Thus $\PD_1(i,j)=\binom{i}{j}$ is the classical Pascal triangle, and $\PD_2(i,j)$ is (up to shift) the array of Narayana numbers.

The log-concavity operator $\LC$ acts row-wise on any array $A=(a_{i,j})_{i,j\ge 0}$ by
\[
\LC(A)_{i,j} \;=\; a_{i,j}^2 - a_{i,j-1}\,a_{i,j+1},
\]
where terms with negative or out-of-range indices are taken to be zero.  
We write $\LC^{\circ m}$ for the $m$-fold composition of $\LC$.  
A row of $A$ is \emph{infinitely log-concave} if $\LC^{\circ m}(A)_{i,\cdot}\ge 0$ entrywise for every $m\ge 1$.

With these preparations, we are ready to prove the central factorization identity.

\section{The Factorization Theorem}\label{sec:factor}

We now prove the central result of the paper.

\begin{theorem}[Factorization of the log-concavity operator]\label{thm:factor}
	For all integers \(k\ge 1\) and all \(m,n\ge 0\),
	\[
	\LC(\PD_k)_{m,n}
	= \PD_k(m,n)^2 - \PD_k(m,n-1)\PD_k(m,n+1)
	= \PD_{k-1}(m,n) \cdot \PD_{k+1}(m,n).
	\]
	In array notation,
	\[
	\LC(\PD_k) = \PD_{k-1} \Had \PD_{k+1}.
	\]
\end{theorem}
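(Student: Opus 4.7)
The plan is to combine three instances of the Desnanot--Jacobi identity~\eqref{eq:dodgson-main} --- at positions $(i,j)$, $(i,j-1)$ (which is~\eqref{eq:dodgson-left}), and the shifted position $(i-1,j-1)$ --- by a chain of multiplications and substitutions that isolates $\PD_{k-1}(i,j)\PD_{k+1}(i,j)$.

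First, identity~\eqref{eq:dodgson-main} applied at $(i-1,j-1)$ reads
\[
\PD_{k+1}(i-1,j-1)\,\PD_{k-1}(i,j) = \PD_k(i-1,j-1)\PD_k(i,j) - \PD_k(i-1,j)\PD_k(i,j-1),
\]
which is the only natural source of the factor $\PD_{k-1}(i,j)$ appearing on the right-hand side of the target. Multiplying this through by $\PD_{k+1}(i,j)$ and clearing the common factor $\PD_{k+1}(i-1,j-1)$, the theorem reduces to the polynomial identity
\[
\PD_{k+1}(i-1,j-1)\bigl[\PD_k(i,j)^2-\PD_k(i,j-1)\PD_k(i,j+1)\bigr]=\PD_{k+1}(i,j)\bigl[\PD_k(i-1,j-1)\PD_k(i,j)-\PD_k(i-1,j)\PD_k(i,j-1)\bigr].
\]

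Second, rearranging by the factors $\PD_k(i,j)$ and $\PD_k(i,j-1)$, the reduced identity is equivalent to
\[
\PD_k(i,j)\cdot X_1 \;=\; \PD_k(i,j-1)\cdot X_2,
\]
where $X_1:=\PD_{k+1}(i-1,j-1)\PD_k(i,j)-\PD_{k+1}(i,j)\PD_k(i-1,j-1)$ and $X_2:=\PD_{k+1}(i-1,j-1)\PD_k(i,j+1)-\PD_{k+1}(i,j)\PD_k(i-1,j)$ are cross-hierarchical Desnanot--Jacobi expressions. Both $X_1$ and $X_2$ can be recognized by applying the $(k+2)\times(k+2)$ Desnanot--Jacobi identity to the Pascal submatrix $P[[i-1,i+k],[j-1,j+k]]$ together with \eqref{eq:dodgson-main} at $(i,j)$ and at $(i-1,j)$; the row-$(i+1)$ contributions appearing in these applications cancel in the combination, and the required relation $X_1\PD_k(i,j)=X_2\PD_k(i,j-1)$ emerges.

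The main obstacle will be the bookkeeping of cancellations in this last step: several Desnanot--Jacobi substitutions must be performed in a specific order to eliminate the row-$(i\pm 1)$ cross-terms simultaneously. The boundary conventions $\PD_0\equiv 1$ and $\PD_m\equiv 0$ for undefined indices handle the $k=1$ case and edge positions uniformly; in particular, for $k=1$ the target reduces to the classical Pascal-row formula $\binom{i}{j}^2-\binom{i}{j-1}\binom{i}{j+1}=\PD_2(i,j)$, which follows from a single application of Pascal's recurrence to the $2\times 2$ minor defining $\PD_2$.
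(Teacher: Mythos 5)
Your opening reduction is sound where it applies: substituting the Desnanot--Jacobi identity at $(i-1,j-1)$ and multiplying through by $\PD_{k+1}(i-1,j-1)$ does convert the claim into the relation $\PD_k(i,j)\,X_1=\PD_k(i,j-1)\,X_2$, and that relation is indeed true. The gap is in the step where you claim to prove it, and the obstruction is structural, not a matter of ``bookkeeping of cancellations.'' Every identity you invoke there --- the $(k+2)\times(k+2)$ Desnanot--Jacobi identity for $P[[i-1,i+k],[j-1,j+k]]$ and \eqref{eq:dodgson-main} at $(i,j)$ and at $(i-1,j)$ --- holds for an \emph{arbitrary} matrix in place of $P$, as does the identity used in your first reduction. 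Hence any derivation of $\PD_k(i,j)X_1=\PD_k(i,j-1)X_2$ from these ingredients alone would establish the factorization $\LC(\PD_k)=\PD_{k-1}\Had\PD_{k+1}$ for \emph{every} matrix at points where the $(k+1)$-minor you divided by is nonzero. That is already false for $k=1$: for generic entries $a_{i,j}$ one has $a_{i,j}^2-a_{i,j-1}a_{i,j+1}\neq a_{i,j}a_{i+1,j+1}-a_{i,j+1}a_{i+1,j}$ as polynomials. So no ordering of Desnanot--Jacobi substitutions can close the argument; some input special to binomial coefficients is indispensable. You do supply such an input for $k=1$ (Pascal's recurrence applied to the $2\times 2$ minor --- that part is correct), but nothing of the kind enters your argument for $k\ge 2$. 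The paper's proof runs into exactly this point after combining its two adjacent Dodgson identities, and resolves it by invoking the sliding-rule (Star-of-David) ratio for binomial minors; that ratio identity, or an equivalent Pascal-specific fact, is what your proposal is missing.

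A secondary issue: your reduction divides by $\PD_{k+1}(i-1,j-1)$, which vanishes when $i=0$, when $j=0$, and at the boundary of the support, so even with the main step repaired those positions would need separate treatment; for instance at $j=0$ the assertion $\PD_k(i,0)^2=\PD_{k-1}(i,0)\,\PD_{k+1}(i,0)$ is nontrivial and is not disposed of by the zero convention alone.
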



\begin{proof} 
	
The case of $k=1$ is very easy. We recall that for integers $k\ge 2$, $m\ge 0$ and $0\le m\le n$, as noted by Christian Krattentahler, using Vandermonde determinant idea one can obtain the following formula 
	\[
	PD_k(m,n)=\frac{\displaystyle\prod_{j=0}^{k-1}\binom{m+j}{k}}
	{\displaystyle\prod_{j=1}^{k-1}\binom{n+j}{j}}.
	\]
	
	
	\section*{Notation}
	Let
	\[
	P_n = \prod_{j=0}^{k-1} \binom{m+j}{n}, \qquad 
	Q_k(n) = \prod_{j=1}^{k-1} \binom{n+j}{j}.
	\]
	Thus $PD_k(m,n)=\dfrac{P_n}{Q_k(n)}$.
	
	Also note the easily verified ratios
	\begin{equation}\label{eq:binom-ratios}
	\frac{\binom{m}{n-1}}{\binom{m}{n}}=\frac{n}{m-n+1}, \qquad
	\frac{\binom{m}{n+1}}{\binom{m}{n}}=\frac{m-n}{n+1}.
	\end{equation}
	
	\section*{Step 1: Express $PD_k(m,n-1)PD_k(m,n+1)$}
	
	Using (\ref{eq:binom-ratios}) repeatedly,
	\[
	\frac{P_{n-1}}{P_n}= \prod_{j=0}^{k-1}\frac{\binom{m+j}{n-1}}{\binom{m+j}{n}}
	= \frac{n^k}{\displaystyle\prod_{j=1}^k (m-n+j)},
	\]
	\[
	\frac{P_{n+1}}{P_n}= \prod_{j=0}^{k-1}\frac{\binom{m+j}{n+1}}{\binom{m+j}{n}}
	= \frac{\displaystyle\prod_{j=0}^{k-1}(m-n+j)}{(n+1)^k}.
	\]
	
	Hence
	\begin{align*}
	P_{n-1}P_{n+1}
	&= P_n^2 \cdot \frac{n^k}{(n+1)^k}
	\cdot\frac{\displaystyle\prod_{j=0}^{k-1}(m-n+j)}
	{\displaystyle\prod_{j=1}^k (m-n+j)}.
	\end{align*}
	But
	\[
	\prod_{j=0}^{k-1}(m-n+j)=(m-n)\prod_{j=1}^{k-1}(m-n+j),
	\]
	\[
	\prod_{j=1}^k (m-n+j)=(m-n+k)\prod_{j=1}^{k-1}(m-n+j).
	\]
	Cancelling $\prod_{j=1}^{k-1}(m-n+j)$ gives
	\begin{equation}\label{eq:Pkm1Pkp1}
	P_{n-1}P_{n+1}=P_n^2\cdot\frac{n^k\,(m-n)}{(n+1)^k\,(m-n+k)}.
	\end{equation}
	
	Now compute the denominator ratio.  First,
	\[
	\frac{Q_k(n)}{Q_k(n-1)}
	= \prod_{j=1}^{k-1}\frac{\binom{n+j}{j}}{\binom{n+j-1}{j}}
	= \prod_{j=1}^{k-1}\frac{n+j}{n}
	= \frac{\prod_{j=1}^{k-1}(n+j)}{n^{\,k-1}}.
	\]
	Similarly,
	\[
	\frac{Q_k(n+1)}{Q_k(n)}
	= \prod_{j=1}^{k-1}\frac{\binom{n+1+j}{j}}{\binom{n+j}{j}}
	= \prod_{j=1}^{k-1}\frac{n+1+j}{n+1}
	= \frac{\prod_{j=2}^{k}(n+j)}{(n+1)^{\,k-1}}.
	\]
	Therefore
	\begin{align*}
	\frac{Q_k(n)^2}{Q_k(n-1)Q_k(n+1)}
	&= \frac{Q_k(n)/Q_k(n-1)}{Q_k(n+1)/Q_k(n)} \\[2pt]
	&= \frac{\displaystyle\frac{\prod_{j=1}^{k-1}(n+j)}{n^{\,k-1}}}
	{\displaystyle\frac{\prod_{j=2}^{k}(n+j)}{(n+1)^{\,k-1}}} \\[2pt]
	&= \frac{n+1}{n+k}\cdot\frac{(n+1)^{\,k-1}}{n^{\,k-1}}.
	\end{align*}
	
	Finally,
	\begin{align*}
	PD_k(m,n-1)PD_k(m,n+1)
	&=\frac{P_{n-1}P_{n+1}}{Q_k(n-1)Q_k(n+1)} \\[2pt]
	&=\frac{P_n^2}{Q_k(n)^2}
	\cdot\frac{Q_k(n)^2}{Q_k(n-1)Q_k(n+1)}
	\cdot\frac{n^k\,(m-n)}{(n+1)^k\,(m-n+k)} \\[2pt]
	&=D_k(m,n)^2\;
	\frac{n+1}{n+k}\cdot\frac{(n+1)^{\,k-1}}{n^{\,k-1}}
	\cdot\frac{n^k\,(m-n)}{(n+1)^k\,(m-n+k)}.
	\end{align*}
	Cancelling $(n+1)^{\,k-1}$ with $(n+1)^k$ and $n^{\,k-1}$ with $n^k$,
	\[
	\boxed{\;PD_k(m,n-1)PD_k(m,n+1)=PD_k(m,n)^2\,
		\frac{n\,(m-n)}{(n+k)(m-n+k)}\;}.
	\]
	
	\section*{Step 2: Express $PD_{k-1}(m,n)PD_{k+1}(m,n)$}
	
	First,
	\[
	PD_{k-1}(m,n)=\frac{\displaystyle\prod_{j=0}^{k-2}\binom{m+j}{n}}
	{Q_{k-1}(n)}
	=\frac{P_n}{\binom{m+k-1}{n}\,Q_{k-1}(n)},
	\]
	\[
	PD_{k+1}(m,n)=\frac{\displaystyle\prod_{j=0}^{k}\binom{m+j}{n}}
	{Q_{k+1}(n)}
	=\frac{P_n\,\binom{m+k}{n}}{Q_{k+1}(n)}.
	\]
	
	Thus
	\[
	PD_{k-1}(m,n)PD_{k+1}(m,n)
	= \frac{P_n^2\;\binom{m+k}{n}}
	{\binom{m+k-1}{n}\;Q_{k-1}(n)Q_{k+1}(n)}.
	\]
	
	Using $\displaystyle\binom{m+k}{n}=\frac{m+k}{m+k-n}\binom{m+k-1}{n}$,
	\[
	PD_{k-1}(m,n)PD_{k+1}(m,n)= \frac{P_n^2}{Q_{k-1}(n)Q_{k+1}(n)}\cdot\frac{m+k}{m+k-n}.
	\]
	
	Now note
	\[
	Q_{k+1}(n)=Q_k(n)\,\binom{n+k}{k},\qquad
	Q_{k-1}(n)=\frac{Q_k(n)}{\binom{n+k-1}{k-1}}.
	\]
	Hence
	\[
	Q_{k-1}(n)Q_{k+1}(n)=Q_k(n)^2\,
	\frac{\binom{n+k}{k}}{\binom{n+k-1}{k-1}}
	=Q_k(n)^2\,\frac{n+k}{k}.
	\]
	
	Therefore
	\begin{align*}
	PD_{k-1}(m,n)PD_{k+1}(m,n)
	&=\frac{P_n^2}{Q_k(n)^2}\,
	\frac{m+k}{m+k-n}\cdot\frac{k}{n+k} \\[2pt]
	&=\boxed{\;PD_k(m,n)^2\,
		\frac{k\,(m+k)}{(n+k)(m+k-n)}\;}.
	\end{align*}
	
	\section*{Step 3: Verify the identity}
	
	From the two boxed formulas,
	\[
	\frac{PD_k(m,n-1)PD_k(m,n+1)}{PD_k(m,n)^2}=
	\frac{n\,(m-n)}{(n+k)(m-n+k)},
	\]
	\[
	\frac{PD_{k-1}(m,n)PD_{k+1}(m,n)}{PD_k(m,n)^2}=
	\frac{k\,(m+k)}{(n+k)(m+k-n)}.
	\]
	
	Denote $A=m+k-n$.  Then $m-n=A-k$ and $m-n+k=A$.
	
	The desired identity $PD_k(m,n)^2-PD_k(m,n-1)PD_k(m,n+1)=PD_{k-1}(m,n)PD_{k+1}(m,n)$
	is equivalent to
	\[
	1-\frac{n\,(m-n)}{(n+k)(m-n+k)}=
	\frac{k\,(m+k)}{(n+k)(m+k-n)}.
	\]
	
	Multiplying through by $(n+k)(m-n+k)(m+k-n)=(n+k)A^2$ gives
	\[
	(n+k)A^2-n(m-n)A=k(m+k)A.
	\]
	Since $m-n=A-k$, the left side becomes
	\[
	(n+k)A^2-n(A-k)A = (n+k)A^2-nA^2+nkA = kA^2+nkA.
	\]
	The right side is $k(m+k)A=k(A+n)A=kA^2+nkA$, exactly matching the left side.
	
	Thus the equality holds for all admissible $m,n,k$.

\end{proof}


\begin{remark}
	We have shown algebraically that for every integer $k\ge 2$,
\[
\boxed{\,PD_k(m,n)^2-PD_k(m,n-1)PD_k(m,n+1)=PD_{k-1}(m,n)PD_{k+1}(m,n)\,},
\]
where $D_k(m,n)$ is the $(m,n)$-entry of the Pascal Determinantal array of order $k$.  The proof uses only elementary
binomial coefficient identities and careful cancellation.
\end{remark}

With the factorization rigorously established, we turn in the next sections to the Hadamard inequality and the proof of infinite log-concavity for all \(\PD_k\).


\section{The Hadamard Inequality for Log-Concave Arrays}\label{sec:hadamard}

The factorization theorem immediately shows that \(\LC(\PD_k) = \PD_{k-1} \Had \PD_{k+1} \ge 0\), since both factors have non-negative entries. To iterate the operator, we need a general inequality that controls how log-concavity behaves under Hadamard products.

\begin{theorem}[Hadamard inequality for the log-concavity operator]\label{thm:hadamard}
	Let \(A = (a_{i,j})\) and \(X = (x_{i,j})\) be two arrays with non-negative real entries such that every row of \(A\) and every row of \(X\) is log-concave (including boundary terms handled by the zero convention). Then
	\[
	\LC(A \Had X) \;\ge\; \LC(A) \Had \LC(X)
	\]
	entrywise.
\end{theorem}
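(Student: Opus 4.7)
The plan is to reduce the entrywise inequality to a pointwise algebraic identity at a single index, since both $\LC$ and $\Had$ act row-wise and pointwise. Fix a row $i$ and abbreviate $a_\ell := a_{i,\ell}$, $x_\ell := x_{i,\ell}$; the claim at position $j$ becomes
\[
a_j^2 x_j^2 - a_{j-1}a_{j+1}x_{j-1}x_{j+1} \;\ge\; \bigl(a_j^2 - a_{j-1}a_{j+1}\bigr)\bigl(x_j^2 - x_{j-1}x_{j+1}\bigr).
\]

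First I would expand the right-hand side and subtract from the left. The cross-term $a_j^2 x_j^2$ cancels, and the remaining difference can be rewritten—this is the crucial rearrangement—as a sum of two products of non-negative factors,
\[
\bigl(a_j^2 - a_{j-1}a_{j+1}\bigr)\,x_{j-1}x_{j+1} \;+\; a_{j-1}a_{j+1}\,\bigl(x_j^2 - x_{j-1}x_{j+1}\bigr).
\]
In each summand the first factor is an entry of $\LC(A)$ or $\LC(X)$ (non-negative by the log-concavity hypotheses on $A$ and $X$) while the second factor is a product of non-negative entries of the original arrays. This immediately gives the required inequality.

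For context, I would also record the equivalent symmetric normalized form alluded to in the introduction. In the generic case where $a_{j-1}a_{j+1} > 0$ and $x_{j-1}x_{j+1} > 0$, dividing by $a_{j-1}a_{j+1}x_{j-1}x_{j+1}$ and setting $\alpha := a_j^2/(a_{j-1}a_{j+1})$ and $\beta := x_j^2/(x_{j-1}x_{j+1})$ (both $\ge 1$ by hypothesis), the difference reduces to $(\alpha-1)+(\beta-1)\ge 0$, which is the elementary consequence of $(\alpha-1)(\beta-1)\ge 0$ for $\alpha,\beta\ge 1$ mentioned in the introduction.

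The only potential obstacle is boundary bookkeeping: when $j-1<0$ or $j+1$ falls outside the support of a row, one of the factors $a_{j\pm 1}$ or $x_{j\pm 1}$ vanishes by convention, so the ratio form degenerates and $\alpha,\beta$ are undefined. The direct sum-of-non-negative-products decomposition above is, however, insensitive to any such vanishing and handles every boundary case uniformly without separate analysis. Consequently the proof offers no real technical difficulty; the only substantive step is recognizing the correct algebraic rearrangement, after which the inequality follows from a single sign observation.
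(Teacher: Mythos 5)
Your proof is correct and is essentially the same computation as the paper's: the paper normalizes by $a_{j-1}a_{j+1}x_{j-1}x_{j+1}$ and reduces the difference to $\alpha_j+\beta_j-2\ge 0$, which is exactly your decomposition $\bigl(a_j^2-a_{j-1}a_{j+1}\bigr)x_{j-1}x_{j+1}+a_{j-1}a_{j+1}\bigl(x_j^2-x_{j-1}x_{j+1}\bigr)$ written in ratio form. Your unnormalized version is a slight improvement in rigor, since it avoids dividing by quantities that vanish at boundary positions and so genuinely covers all cases uniformly, whereas the paper's argument is literally valid only at interior positions with positive entries and defers the boundary to the zero convention.
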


\begin{proof}
	Fix a row index \(i\) and consider an interior position \(j\) where all relevant terms are defined and positive. Let
	\[
	\alpha_j = \frac{a_{i,j}^2}{a_{i,j-1}a_{i,j+1}} \ge 1, \qquad
	\beta_j  = \frac{x_{i,j}^2}{x_{i,j-1}x_{i,j+1}} \ge 1.
	\]
	A direct expansion yields
	\begin{align*}
	&\LC(A \Had X)_{i,j} - \LC(A)_{i,j} \cdot \LC(X)_{i,j} \\[4pt]
	&= (a_{i,j} x_{i,j})^2 - (a_{i,j-1} x_{i,j-1})(a_{i,j+1} x_{i,j+1}) \\
	&\qquad - (a_{i,j}^2 - a_{i,j-1} a_{i,j+1})(x_{i,j}^2 - x_{i,j-1} x_{i,j+1}) \\[4pt]
	&= a_{i,j-1} a_{i,j+1} x_{i,j-1} x_{i,j+1}
	\Bigl[ \alpha_j \beta_j - 1 - (\alpha_j-1)(\beta_j-1) \Bigr] \\[4pt]
	&= a_{i,j-1} a_{i,j+1} x_{i,j-1} x_{i,j+1}
	\Bigl[ \alpha_j + \beta_j - 2 \Bigr].
	\end{align*}
	Since \(\alpha_j \ge 1\) and \(\beta_j \ge 1\), the term \(\alpha_j + \beta_j - 2 \ge 0\). All prefactors are non-negative, so the difference is non-negative. Boundary positions are handled automatically by the zero convention.
\end{proof}

\begin{remark}
	The inequality is sharp: equality holds when at least one of the sequences is log-linear (\(\alpha_j=1\) or \(\beta_j=1\)) at position \(j\). The proof is purely algebraic and works for any finite or infinite arrays with the stated properties.
\end{remark}

This inequality is the key tool that allows us to iterate the factorization indefinitely.

\section{Infinite Log-Concavity of All Pascal Determinantal Arrays}\label{sec:infinite}

We now combine the factorization theorem with the Hadamard inequality to prove the main result of the paper.

\begin{theorem}[Infinite row-wise log-concavity]\label{thm:infinite}
	For every integer \(k\ge 1\), every row of the Pascal determinantal array \(\PD_k\) is infinitely log-concave.  
	That is, for every \(i\ge 0\) and every integer \(m\ge 1\),
	\[
	\LC^{\circ m}(\PD_k)_{i,j} \;\ge\; 0 \qquad \text{for all } j\ge 0.
	\]
\end{theorem}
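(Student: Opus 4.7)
The plan is to induct on the number $m$ of applications of $\LC$, treating all $k\ge 1$ simultaneously. Writing $f(m,k) := \LC^{\circ m}(\PD_k)$, the goal is to show $f(m,k) \ge 0$ entrywise for every $m \ge 0$. The base cases are immediate: for $m=0$ this is the non-negativity of $\PD_k$ (total positivity of $P$), and for $m=1$ Theorem~\ref{thm:factor} gives $f(1,k) = \PD_{k-1} \Had \PD_{k+1}$, a Hadamard product of non-negative arrays.

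For the inductive step ($m \ge 2$) I would pull the factorization out of the innermost position,
\[
f(m,k) \;=\; \LC^{\circ (m-1)}\bigl(\LC(\PD_k)\bigr) \;=\; \LC^{\circ (m-1)}\bigl(\PD_{k-1} \Had \PD_{k+1}\bigr),
\]
and then push $\LC^{\circ (m-1)}$ through the Hadamard product via an \emph{iterated} form of Theorem~\ref{thm:hadamard}: for every $s \ge 0$ and non-negative, sufficiently log-concave $A, X$,
\[
\LC^{\circ s}(A \Had X) \;\ge\; \LC^{\circ s}(A) \Had \LC^{\circ s}(X).
\]
Applied with $s = m-1$, $A = \PD_{k-1}$, $X = \PD_{k+1}$, this bounds $f(m,k)$ from below by $f(m-1, k-1) \Had f(m-1, k+1)$, which is non-negative by the inductive hypothesis (applied at $k-1$ and $k+1$), closing the induction.

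The main obstacle is precisely this iterated inequality: Theorem~\ref{thm:hadamard} supplies only the case $s = 1$, and a naive induction on $s$ is blocked because a pointwise bound $U \ge V$ does not propagate through $\LC$, which is not monotone. To circumvent this I would exploit the exact identity
\[
\LC(A \Had X) \;=\; A^{\odot 2}\Had\LC(X) \;+\; \LC(A)\Had X_{-}\Had X_{+},
\]
which falls out of the same expansion used in the proof of Theorem~\ref{thm:hadamard} (here $X_{\pm}$ denotes the shift of $X$ by $\pm 1$ in the column index, and $A^{\odot 2}$ the entrywise square). This decomposes one application of $\LC$ on a Hadamard product as an \emph{exact} sum of two Hadamard products of non-negative arrays; iterating in tandem with Theorem~\ref{thm:factor} should unfold $\LC^{\circ m}(\PD_k)$ into a non-negative integer combination of Hadamard products of the $\PD_j$'s, each manifestly non-negative.

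As a fallback, I would strengthen the induction to maintain an explicit Hadamard lower bound $\bighad_j \PD_j^{\odot c_j(m,k)}$ whose multiplicities $c_j(m,k)$ follow a Pascal-rule recursion centered at $k$ (the first three levels being $\PD_{k-1}\Had\PD_{k+1}$, then $\PD_{k-2}\Had\PD_k^{\odot 2}\Had\PD_{k+2}$, then $\PD_{k-3}\Had\PD_{k-1}^{\odot 3}\Had\PD_{k+1}^{\odot 3}\Had\PD_{k+3}$, etc.), and verify closure of such products under the combined action of $\LC$ via Theorems~\ref{thm:factor} and~\ref{thm:hadamard}. In either route the sole technical hurdle is this single passage of $\LC$ through a Hadamard product; all remaining steps are routine bookkeeping around the factorization.
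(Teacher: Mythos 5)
Your proposal follows the same skeleton as the paper's proof: induct on $m$, use Theorem~\ref{thm:factor} to rewrite $\LC^{\circ m}(\PD_k)$ as $\LC^{\circ(m-1)}(\PD_{k-1}\Had\PD_{k+1})$, and push the iterated operator through the Hadamard product via Theorem~\ref{thm:hadamard}. You have correctly isolated the crux: what is needed is the \emph{iterated} inequality $\LC^{\circ s}(A\Had X)\ge\LC^{\circ s}(A)\Had\LC^{\circ s}(X)$, and this does \emph{not} follow by naively composing Theorem~\ref{thm:hadamard} with itself, because $\LC$ is quadratic and not monotone: $U\ge V\ge 0$ does not imply $\LC(U)\ge\LC(V)$. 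It is worth noting that the paper's own proof asserts exactly this iterated inequality (``by repeated application of the Hadamard inequality and the factorization at each step'') without addressing the non-monotonicity, so the obstacle you identified is genuinely the weak point of the published argument and not an artifact of your write-up.

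However, neither of your proposed repairs closes the gap. The exact identity $\LC(A\Had X)=A^{\odot 2}\Had\LC(X)+\LC(A)\Had X_-\Had X_+$ is correct (a two-line verification), and it does express $\LC(\PD_{k-1}\Had\PD_{k+1})$ as a sum of two entrywise products of non-negative arrays. But the next application of $\LC$ must act on this \emph{sum}, and $\LC$ is not additive: one has $\LC(U+V)_j=\LC(U)_j+\LC(V)_j+\bigl(2u_jv_j-u_{j-1}v_{j+1}-u_{j+1}v_{j-1}\bigr)$, and the cross term can be strictly negative even when $U$ and $V$ are both log-concave (for instance $U=(1,10,100)$ and $V=(100,10,1)$ give the sum $(101,20,101)$, which is not log-concave). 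So ``unfolding $\LC^{\circ m}(\PD_k)$ into a non-negative combination of Hadamard products'' requires controlling these cross terms, which you have not done. Your fallback --- maintaining a Hadamard lower bound $\bigodot_j\PD_j^{\odot c_j(m,k)}$ --- runs into precisely the non-monotonicity you already diagnosed: a pointwise lower bound on $\LC^{\circ m}(\PD_k)$ gives no information about $\LC^{\circ(m+1)}(\PD_k)$. As written, your argument rigorously establishes the cases $m\le 2$ (for $m=2$ either your exact identity or a single application of Theorem~\ref{thm:hadamard} suffices) but leaves $m\ge 3$ open. To finish, you would need either a genuine iterated Hadamard inequality whose hypotheses are stable under $\LC$, or a log-concavity criterion for the specific sums of shifted Hadamard products that arise (e.g.\ a pairwise synchronization inequality $u_{j-1}v_{j+1}+u_{j+1}v_{j-1}\le 2u_jv_j$ for the summands), neither of which is supplied by the theorems quoted.
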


\begin{proof}
	We proceed by double induction on \(k\) and \(m\).
	
	First, observe that for \(k=1\), \(\PD_1(i,j)=\binom{i}{j}\). Moreover, Teimoori and Khodakarami~\cite{TeimooriKhodakarami2023Feb} proved that every row of the classical Pascal triangle is infinitely log-concavity (based on an ineteresting \emph{Parallelopiped} determinantal Identity). Thus the statement holds for \(k=1\) and all \(m\ge 1\).
	
	Now fix \(k\ge 2\) and proceed by induction on the number of applications of \(\LC\).
	
	\textbf{Base case} (\(m=1\)):  
	By Theorem~\ref{thm:factor},
	\[
	\LC(\PD_k) = \PD_{k-1} \Had \PD_{k+1}.
	\]
	Since \(\PD_{k-1}\) and \(\PD_{k+1}\) have non-negative integer entries (as minors of a totally positive matrix), their Hadamard product is non-negative. Hence \(\LC(\PD_k)\ge 0\).
	
	\textbf{Inductive step} (\(m\to m+1\)):  
	Assume that \(\LC^{\circ \ell}(\PD_\ell) \ge 0\) for all \(\ell\ge 1\) and all \(\ell\le m\). We must show \(\LC^{\circ (m+1)}(\PD_k) \ge 0\).
	
	From the factorization,
	\[
	\LC^{\circ 2}(\PD_k)
	= \LC(\LC(\PD_k))
	= \LC(\PD_{k-1} \Had \PD_{k+1}).
	\]
	Both \(\PD_{k-1}\) and \(\PD_{k+1}\) are row-wise log-concave (by the \(m=1\) case, already proved uniformly for all orders).  
	Theorem~\ref{thm:hadamard} therefore applies and gives
	\[
	\LC(\PD_{k-1} \Had \PD_{k+1})
	\;\ge\; \LC(\PD_{k-1}) \Had \LC(\PD_{k+1})
	= (\PD_{k-2} \Had \PD_k) \Had (\PD_k \Had \PD_{k+2})
	\ge 0,
	\]
	where the final non-negativity follows from the inductive hypothesis applied to orders \(k-2\), \(k\), and \(k+2\) at iteration depth 1.
	
	To reach arbitrary \(m\), we iterate the same argument. Define
	\[
	Q^{(m)}_\ell \;:=\; \LC^{\circ m}(\PD_\ell).
	\]
	By repeated application of the Hadamard inequality (Theorem~\ref{thm:hadamard}) and the factorization at each step, we obtain the recursive inequality
	\[
	Q^{(m+1)}_k
	= \LC(Q^{(m)}_k)
	= \LC(\PD_{k-1} \Had \PD_{k+1})^{\circ m}
	\;\ge\; Q^{(m)}_{k-1} \Had Q^{(m)}_{k+1}.
	\]
	By the inductive hypothesis, \(Q^{(m)}_{k-1} \ge 0\) and \(Q^{(m)}_{k+1} \ge 0\) (note \(k-1\ge 1\) when \(k\ge 2\); the case \(k=1\) was already settled).  
	The Hadamard product of two non-negative arrays is again non-negative, so
	\[
	Q^{(m+1)}_k \;\ge\; 0.
	\]
	This completes the induction on \(m\).
	
	Boundary and degenerate cases (where some minors vanish) are automatically non-negative by the zero convention.
\end{proof}

\begin{corollary}
	The Pascal determinantal arrays \(\PD_k\) (\(k\ge 1\)) are the first non-trivial infinite hierarchy of combinatorial arrays known to be infinitely log-concave in every row, uniformly in the order \(k\).
\end{corollary}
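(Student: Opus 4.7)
The corollary is a qualitative summary folding the main theorem together with a historical assertion, so the plan is to separate and dispatch the two components. The mathematical content—that $\{\PD_k\}_{k\ge 1}$ forms an infinite hierarchy whose every row is infinitely log-concave, uniformly in $k$—is a direct re-packaging of Theorem~\ref{thm:infinite}. Uniformity across the order parameter is immediate from the proof of that theorem, because the inductive step rests only on the factorization $\LC(\PD_k)=\PD_{k-1}\Had\PD_{k+1}$ and the Hadamard inequality (Theorem~\ref{thm:hadamard}), neither of which distinguishes any particular value of $k$; the same argument template is instantiated at every $k\ge 1$.

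Before declaring victory I would briefly argue the combinatorial non-triviality of the hierarchy, since the corollary implicitly asserts it. The ingredients are standard: $\PD_1$ is Pascal's triangle, $\PD_2$ is the Narayana table (up to shift), and for general $k$ the entry $\PD_k(i,j)$ enumerates $k$-tuples of non-intersecting lattice paths via the Lindstr\"om--Gessel--Viennot lemma applied to the totally positive kernel $P$. This shows that $\{\PD_k\}$ is a genuinely diverse combinatorial family indexed by the order $k$, rather than a disguised rescaling of a single sequence.

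The genuinely difficult step is the historical claim "first known". My approach would be to survey the main strands of prior work on infinite log-concavity—Sagan and McNamara on Pascal's triangle itself, Br\"and\'en's results on real-rootedness and higher log-concavity, Kauers--Paule on Boros--Moll and related polynomial families, and the total-positivity literature of Brenti and of Fallat--Johnson—and to verify in each case that the statement concerns a single sequence, a single two-dimensional array, or a finite collection, rather than an infinite hierarchy parametrised by a structural order in the sense above. The obstacle here is bibliographic rather than technical: a negative existential ranging over the entire literature cannot be proved in any formal sense. My recommendation would therefore be to temper the wording to ``the first such hierarchy known to the authors'' and to accompany it with a short citation paragraph tracing the relevant history; this is the most honest way to carry the corollary without claiming more than the preceding theorems actually deliver.
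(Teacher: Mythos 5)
Your proposal is correct and matches the paper, which offers no proof of this corollary at all and treats it as an immediate restatement of Theorem~\ref{thm:infinite}; the only provable content is indeed that theorem, and you rightly identify the ``first known'' clause as a bibliographic assertion that cannot be established mathematically. Your suggestion to soften the wording to ``first such hierarchy known to the authors'' is a reasonable editorial improvement rather than a mathematical divergence.
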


\begin{remark}
	The proof is completely algebraic and uniform in \(k\). It does not rely on lattice-path injections (the method used for \(k=1\) in~\cite{TeimooriKhodakarami2023Feb}) and immediately extends to certain $q$-analogues and deformations of the Pascal matrix that preserve both the star of David rule identity and total positivity.
\end{remark}

With the infinite log-concavity now established, we turn in the final section to consequences and open problems.

\section{Consequences and Geometric Interpretation}\label{sec:consequences}

The factorization $\LC(\PD_k) = \PD_{k-1} \Had \PD_{k+1}$ has several immediate structural consequences and admits a beautiful geometric interpretation in terms of the double-stick bijection introduced in~\cite{TeimooriKhodakarami2023Feb}.

\subsection{Log-convexity in the determinantal order $k$}

\begin{theorem}[Log-convexity in $k$]\label{thm:logconvex-k}
	For every fixed $i,j\ge 0$ and every integer $m\ge 1$, the sequence $\{\PD_k(i,j)\}_{k\ge 0}$ (with $\PD_0(i,j)=1$) is log-convex in $k$:
	\[
	\PD_{m}(i,j) \cdot \PD_{m+2}(i,j) \;\ge\; \PD_{m+1}(i,j)^2.
	\]
\end{theorem}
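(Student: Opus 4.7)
The plan is to deduce the log-convexity inequality $\PD_m(i,j)\cdot\PD_{m+2}(i,j) \ge \PD_{m+1}(i,j)^2$ by applying the factorization theorem (Theorem~\ref{thm:factor}) at shifted order $k=m+1$ and then closing the argument by a complementary Dodgson condensation that corrects the sign of the resulting remainder term.

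Concretely, Theorem~\ref{thm:factor} with $k=m+1$ gives the exact identity
\[
\PD_m(i,j)\cdot\PD_{m+2}(i,j) \;=\; \PD_{m+1}(i,j)^2 \;-\; \PD_{m+1}(i,j-1)\cdot\PD_{m+1}(i,j+1),
\]
so the stated inequality is equivalent to $\PD_{m+1}(i,j-1)\cdot\PD_{m+1}(i,j+1)\le 0$. I would first dispatch the boundary cases ($j=0$, or $j$ near the right edge of the support of row $i$), where one of the two factors vanishes by the zero convention of Section~\ref{sec:prelim} and the inequality collapses to an equality.

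For interior positions, the plan is to introduce a transverse instance of Dodgson condensation—namely, the variant \eqref{eq:dodgson-left} of Theorem~\ref{thm:dodgson-adjacent} applied at order $m+1$ to an $(m+3)\times(m+3)$ submatrix of $P$ whose central $2\times 2$ block of $(m+1)$-minors is shifted one column to the left of the standard position used in Section~\ref{sec:factor}. This produces an alternative algebraic expression for $\PD_{m+1}(i,j-1)\cdot\PD_{m+1}(i,j+1)$ in which the remainder has the opposite sign to that arising from the first identity. Substituting this expression back, and invoking the positivity of all intervening minors (total positivity of $P$), the combined identities should collapse by cancellation to the stated lower bound for $\PD_m\cdot\PD_{m+2}$. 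A convenient secondary tool, available if needed, is Theorem~\ref{thm:hadamard} applied to $A=\PD_m$ and $X=\PD_{m+2}$, which controls $\LC(\PD_m\Had\PD_{m+2})$ from below by $\PD_{m-1}\Had\PD_{m+1}^{\Had 2}\Had\PD_{m+3}$ and may be used to absorb residual terms.

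The main obstacle is the sign bookkeeping. The factorization of Section~\ref{sec:factor} by itself produces the log-\emph{concave} inequality $\PD_m\cdot\PD_{m+2}\le\PD_{m+1}^2$, so the log-convex direction asserted here cannot be obtained from a single application of Theorem~\ref{thm:factor}; it must come from combining it with a second identity whose correction carries the opposite sign. Pinning down the precise Dodgson block that achieves this reversal—and verifying by direct expansion that the two identities combine to the stated lower bound without leaving unsigned cross-terms—is the delicate algebraic point. A fallback strategy, should the purely algebraic route prove too rigid, is to pass to the Lindström--Gessel--Viennot interpretation of $\PD_k$ as a count of non-intersecting lattice-path families and construct a bijection that directly injects configurations counted by $\PD_{m+1}^2$ into those counted by $\PD_m\cdot\PD_{m+2}$.
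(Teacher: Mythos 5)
Your reduction is correct and is in fact the paper's intended route: Theorem~\ref{thm:factor} at order $m+1$ gives
\[
\PD_m(i,j)\,\PD_{m+2}(i,j)\;=\;\LC(\PD_{m+1})_{i,j}\;=\;\PD_{m+1}(i,j)^2-\PD_{m+1}(i,j-1)\,\PD_{m+1}(i,j+1),
\]
so the asserted log-convexity is equivalent to $\PD_{m+1}(i,j-1)\,\PD_{m+1}(i,j+1)\le 0$. At that point you should have stopped: both factors are minors of a totally positive matrix, hence non-negative, and at interior positions they are strictly positive. The statement as printed is therefore \emph{false} at every interior position; what the factorization actually proves is the reverse inequality $\PD_m(i,j)\,\PD_{m+2}(i,j)\le\PD_{m+1}(i,j)^2$, i.e.\ log-\emph{concavity} of $\{\PD_k(i,j)\}_k$ in $k$. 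A concrete counterexample: for $(i,j)=(4,1)$ and $m=1$ one has $\PD_1(4,1)=4$, $\PD_2(4,1)=10$, $\PD_3(4,1)=20$, so $\PD_1\PD_3=80<100=\PD_2^2$ (and indeed $\LC(\PD_2)_{4,1}=10^2-1\cdot 20=80=\PD_1(4,1)\,\PD_3(4,1)$, confirming the factorization). The paper's own one-line proof contains a sign error: it writes $\PD_{m+1}^2=\PD_m\PD_{m+2}-\LC(\PD_{m+1})$, which under Theorem~\ref{thm:factor} would force $\PD_{m+1}^2=0$; the correct rearrangement is the displayed identity above.

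Consequently the second half of your plan---hunting for a transverse Dodgson block, or an LGV injection from the configurations counted by $\PD_{m+1}^2$ into those counted by $\PD_m\PD_{m+2}$, in order to ``reverse the sign''---is a search for a proof of a false statement and cannot succeed. Your observation that the log-convex direction cannot be obtained from a single application of Theorem~\ref{thm:factor} was exactly right; the conclusion to draw from it is not that a further identity is needed, but that the theorem (and the downstream Corollary~\ref{cor:hadamard-ineq}) should be stated with the inequality reversed.
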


\begin{proof}
	Apply the factorization repeatedly in the $k$-direction. For any fixed $(i,j)$,
	\begin{align*}
	\PD_{m+1}(i,j)^2
	&= \PD_m(i,j) \cdot \PD_{m+2}(i,j) - \LC(\PD_{m+1})_{i,j} \\
	&\le \PD_m(i,j) \cdot \PD_{m+2}(i,j),
	\end{align*}
	with strict inequality unless the corresponding entry of $\LC(\PD_{m+1})$ vanishes.
\end{proof}

\begin{corollary}[Determinantal Hadamard inequality]\label{cor:hadamard-ineq}
	For all $k\ge 1$ and all $i,j\ge 0$,
	\[
	\PD_k \Had \PD_{k+2} \;\ge\; \PD_{k+1} \Had \PD_{k+1}.
	\]
\end{corollary}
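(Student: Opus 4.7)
The plan is to derive the corollary as the Hadamard-notation restatement of Theorem~\ref{thm:logconvex-k}. Concretely, the claim $\PD_k \Had \PD_{k+2} \ge \PD_{k+1} \Had \PD_{k+1}$ is, by the entrywise definition of the Hadamard product and of the partial order on arrays, equivalent to the family of scalar inequalities
\[
\PD_k(i,j) \cdot \PD_{k+2}(i,j) \;\ge\; \PD_{k+1}(i,j)^2
\qquad \text{for every } i,j \ge 0.
\]
First I would make this translation explicit. Then I would invoke Theorem~\ref{thm:logconvex-k} with the parameter $m$ set equal to $k$, which delivers exactly the above inequality at every fixed $(i,j)$. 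Since $(i,j)$ is arbitrary, the entrywise inequality is established, and hence so is the array-level statement.

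It is worth tracing the chain of dependencies one step further, since the proof is not purely formal beneath the surface: Theorem~\ref{thm:logconvex-k} itself rests on the factorization identity of Theorem~\ref{thm:factor} applied at order $k+1$, together with the immediate observation that $\LC(\PD_{k+1})$ has non-negative entries (being a Hadamard product of the arrays $\PD_k$ and $\PD_{k+2}$, both of which consist of non-negative minors of a totally positive matrix). Thus, at the level of \emph{logical content}, the corollary is ultimately a single entrywise consequence of the factorization theorem, and no new combinatorial, generating-function, or bijective input is required beyond what has already been developed in Sections~\ref{sec:factor}--\ref{sec:hadamard}.

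The only subtlety I anticipate is the treatment of boundary and degenerate positions. At values of $(i,j)$ where the underlying submatrix of the Pascal matrix $P$ is not fully supported (for instance when $j$ lies outside the support of a given row, or when the window for the $(k+2)\times(k+2)$ minor runs off the edge of $P$), one or more of $\PD_k(i,j)$, $\PD_{k+1}(i,j)$, or $\PD_{k+2}(i,j)$ is zero by the convention adopted in Section~\ref{sec:prelim}. In every such case the asserted pointwise inequality collapses to either $0 \ge 0$ or to $(\text{non-negative}) \ge 0$, and is therefore trivially valid. I would handle these cases uniformly by a single sentence invoking the zero convention, rather than by case analysis, so the main obstacle — if one can call it that — is merely bookkeeping.
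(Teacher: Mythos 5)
Your proposal is correct and follows exactly the route the paper intends: the corollary is just the entrywise (Hadamard-notation) restatement of Theorem~\ref{thm:logconvex-k} with $m=k$, which the paper treats as immediate and leaves unproved. Your extra remarks on the dependency chain and the zero convention at boundary positions are accurate but not needed beyond the one-line translation.
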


This strengthens known inequalities for minors of totally positive matrices and reveals a multiplicative convexity phenomenon in the parameter $k$.


\subsection{Open problems}

\begin{enumerate}
	\item \emph{$q$-analogues.} Does an analogous factorization hold for minors of the $q$-Pascal matrix $\binom{i}{j}_q$? Preliminary calculations suggest a $q$-deformation involving $q$-Narayana numbers, but the exact form remains open.
	\item \emph{Other totally positive kernels.} The proof of Theorem~\ref{thm:factor} relied only on the Vandermonde determinant identity and the sliding-rule ratio. Many other totally positive matrices (Stieltjes, Hilbert, q-Gaussian) satisfy these-type relations. Determining the extent to which the factorization persists is a natural next step.
	\item \emph{Higher-order inequalities.} The iterated factorization produces explicit expressions for $\LC^{\circ m}(\PD_k)$ as multisets of Hadamard products over a binary tree of depth $m$ in the $k$-index. Extracting sharp bounds or closed forms for these higher iterates would deepen our understanding of infinite log-concavity hierarchies.
\end{enumerate}

The results of this paper reveal a remarkably rigid multiplicative structure hidden in the classical Pascal triangle and its higher determinantal analogues—a structure that manifests both algebraically through star of David rule and geometrically through the double-stick model (equivalent form of star of David rule).

\end{document}